\newtheorem{theorem}{Theorem}[section]
\newtheorem{proposition}[theorem]{Proposition}
\newtheorem*{theorem*}{Theorem}
\theoremstyle{definition}
\newtheorem{remark}[theorem]{Remark}
\newtheorem*{remark*}{Remark}
\newcommand{\tr}{\operatorname{tr}}
\newcommand{\trace}{\operatorname{trace}}
\newcommand{\PGL}{\operatorname{PGL}}
\newcommand{\PSL}{\operatorname{PSL}}
\newcommand{\GL}{\operatorname{GL}}
\newcommand{\SL}{\operatorname{SL}}
\newcommand{\Isom}{\operatorname{Isom}}
\newcommand{\C}{\mathbb C}
\newcommand{\R}{\mathbb R}
\newcommand{\Q}{\mathbb Q}
\newcommand{\HH}{\mathbb H}
\begin{document}

\title[Intercusp geodesics]{Intercusp geodesics and the invariant trace field of hyperbolic
  3-manifolds} 

\author{Walter D Neumann} \address{Department of Mathematics, Barnard
  College, Columbia University, 2990 Broadway MC4429, New York, NY
  10027, USA} \email{neumann@math.columbia.edu} 

\author{Anastasiia Tsvietkova} \address{Department of Mathematics,
  University of California - Davis, One Shields Ave, Davis, CA 95616, USA}
\email{tsvietkova@math.ucdavis.edu}

\subjclass{57M25, 57M50, 57M27} 

\keywords{Link complement, hyperbolic 3-manifold, invariant trace
  field, cusp, arithmetic invariants}

\begin{abstract}Given a cusped hyperbolic 3-manifold with finite
  volume, we define two types of complex parameters which capture geometric
  information about the preimages of geodesic arcs traveling
  between cusp cross-sections. We prove that these parameters are elements
  of the invariant trace field of the manifold, providing a connection between the intrinsic geometry of a 3-manifold and its number-theoretic invariants. Further, we explore the question of choosing a minimal collection of arcs and associated parameters to generate the field. We prove that for a tunnel number $k$ manifold it is enough to choose $3k$ specific parameters. For many hyperbolic link complements, this approach allows one to compute the field from a link diagram. We also give examples of infinite families of links where a single parameter can be chosen to generate the field, and the polynomial for it can be constructed from the link diagram as well. 

\end{abstract}
%\doublespacing

\maketitle

\section{Introduction}
The invariant trace field is one of the most used tools in the study
of hyperbolic manifolds from the number-theoretical point of view. In
this note, we discuss how this arithmetic invariant is related to the
intrinsic geometry of the manifold, and to intercusp geodesics in
particular. This geometric perspective allows one to compute the
invariant trace field of many hyperbolic link complements from their
diagrams.

$M$ will always denote a complete orientable hyperbolic $3$-manifold
of finite volume. If $\Gamma$ is the image of a discrete faithful
representation of the fundamental group of $M$ into $\Isom^+(\HH^3)$,
then $M$ can be regarded as the quotient $\HH^3/\Gamma$. After picking
upper half space coordinates $\C\times\R_+$ on $\HH^3$ we can identify
$\Isom^+(\HH^3)$ with $\PSL_2(\C)$ acting so that its action on the
sphere at infinity $\C\cup\{\infty\}$ is by M\"obius
transformations. This identification is only determined up to
conjugacy, since it depends on the coordinate choice. Nevertheless,
for an element $\gamma\in \Gamma$ we
can speak of the \emph{trace} $\tr(\gamma)$ (determined only up to
sign), since the trace of a matrix is
invariant under conjugacy.

The field $\tr(\Gamma)$ generated by the traces of elements of
$\Gamma$ is called the \emph{trace field} of $M$. In view of the
Mostow-Prasad rigidity, $\tr(\Gamma)$ is a finite extension of $\Q$
(the proof can be found in \cite{Maclachlan}). It is an invariant of
the group $\Gamma$ and thus is a topological invariant of the
manifold, but in general it is not an invariant of its
commensurability class in $\PSL_2(\C)$ (see \cite{Borel, NeumannReid,
  Reid} for counterexamples).

Consider the subgroup $\Gamma^2=\langle\gamma^2 ~|~ \gamma\in
\Gamma\rangle$ of $\Gamma$. The \emph{invariant trace field} is the field
generated  over $\Q$ by the traces of $\Gamma^2$.  Often
denoted by $k(\Gamma)$ or $k(M)$, the invariant trace field is a
topological and commensurability invariant of the manifold
(\cite{Reid}). Clearly, it is a subfield of the trace field. If $M$ is
a link complement, it actually coincides with the trace field (this
was proven in \cite{Reid} for knots, and in \cite{NeumannReid} for
links).

In \cite{NeumannReid}, it is shown that the invariant trace field
contains useful geometric information about the hyperbolic
manifold. In particular, if $\Gamma$ contains parabolic elements, the
invariant trace field is equal to the field generated by shapes of all
tetrahedra of any ideal triangulation of $M$ (by ``shape'' of an ideal
tetrahedron we mean the cross-ratio of the vertices as elements of
$\C\cup\{\infty\}$; it is determined up to a three-fold ambiguity,
depending on an orientation-compatible choice of ordering of the
vertices). {Here we show that the invariant trace field also contains  certain
complex ``intercusp parameters'' that
measure distances and angles between cusps (more precisely, 
between preimages in $\HH^3$ of cusp cross-sections), as well as
``translation parameters'' which measure displacement between ends of
intercusp geodesic arcs.

With the ideas described in \cite{ThistlethwaiteTsvietkova} this
enables one to compute generators of the invariant trace field of a
hyperbolic link directly from a link diagram in many cases. For
example, for $2$-bridge links, we demonstrate that a single intercusp
parameter suffices, with a polynomial which can be constructed
combinatorially from the diagram.  Previously known methods included
finding decimal approximations of simplex shapes, and then making an
intelligent guess of the corresponding polynomial for the field using
the LLL algorithm (see \cite{Neumann2000}).}

\section{The parameters}\label{sec:params}
In this section, we assume our hyperbolic $3$-manifold $M$ has at
least one cusp. We introduce a complex parameter that captures
geometric information about distances and angles between preimages of
cusp cross-sections in $M$. Later we will use it to compute invariant
trace fields of links from their diagrams. The idea of such a
parameter appeared in \cite{ThistlethwaiteTsvietkova} for intercusp
geodesics that correspond to crossings of a link diagram. Here we will
consider it in a more general setting. Our definition also dovetails
with a notion of ``complex length" of a geodesic that was introduced
in \cite{NeumannReid}.

% We identify the boundary of $\HH^3$ with the Riemann sphere
% $\C\cup\{\infty\}$, using the upper half-space model of hyperbolic
% space. 

We will speak loosely of the sphere at infinity as the ``boundary'' of
$\HH^3$.  Each horospherical cusp cross-section of $M$ is a torus
which lifts to a set of horospheres tangent to the boundary of
$\HH^3$. The point of tangency of such a horosphere $H_i$ will be
referred to as the center of $H_i$ and will be denoted by $P_i$.

For each cusp of $M$ we choose an essential simple closed curve in a
horospherical section of the cusp, which we call the \emph{meridian}.
(If $M$ is the complement of an oriented link in $S^3$ we choose the
standard meridians, which are oriented using the right hand screw
rule.) Henceforth we will assume that the horospherical torus
cross-section of each cusp of $M$ has been chosen so that the
(geodesic) meridian curve on this torus has length $1$. Such choice
guarantees that the horoballs have disjoint interiors. The horoballs
are in fact disjoint in every case except for the figure-eight knot
complement in $S^3$ (as was proved \cite{Adams2002}). For the
figure-eight, the corresponding cross-sectional torus touches itself
in two points.

In the following we only consider horospheres in $\HH^3$ which are
lifts of cusp cross-sections of $M$ as above. Each such horosphere can
be regarded as the complex plane, with coordinates specified (up to
translation) by declaring that the meridional translation corresponds
to the real number one.  If we position $H_i$ to be the Euclidean
plane $z=1$ centered at $\infty$ (which we denote by $H_{\infty}$),
% Any length measured on this particular horosphere is simultaneously a
% Euclidean and hyperbolic length.
then the meridional translation on $H_i$ is represented by the
matrix
$\Small\setlength{\arraycolsep}{3pt}\begin{pmatrix}1&1\\0&1\end{pmatrix}$. For
convenience we will often abuse the distinction between
$\Isom^+(\HH^3)=\PSL_2(\C)$ and $\SL_2(\mathbb C)$ and simply work
with matrices. Since $\PSL_2(\C)=\PGL_2(\C)$ we sometimes use matrices
in $\GL_2(\C)$ (but note that the $\PSL$ trace $\tr(A)$ of such a matrix $A$ is
$\frac{\pm\trace(A)}{\sqrt{\det(A)}}$).

Let $\gamma(H_1,H_2)$ be the shortest geodesic arc connecting two
horospheres $H_1$ and $H_2$ in $\HH^3$. If it has length $d$, we can
parallel translate along $\gamma(H_1,H_2)$ and then rotate by an angle
$\theta$ in $H_2$ to take the meridional direction on $H_1$ to the the
meridional direction on $H_2$. We call the complex
number $$\delta(H_1,H_2) :=d+i\theta$$ the \emph{complex distance
  between $H_1$ and $H_2$} and we call
$$ w(H_1, H_2) :=e^{-\delta(H_1,H_2)}$$
the \emph{intercusp parameter}\footnote{In
  \cite{ThistlethwaiteTsvietkova} certain intercusp parameters, with
  opposite sign, are called ``crossing labels''.}.
Fig.~\ref{fig:intercusp} illustrates a complex intercusp distance with
argument $\theta$ between $0$ and $\pi$.
 \begin{figure}[ht]
  \centering
  \includegraphics[scale=0.41]{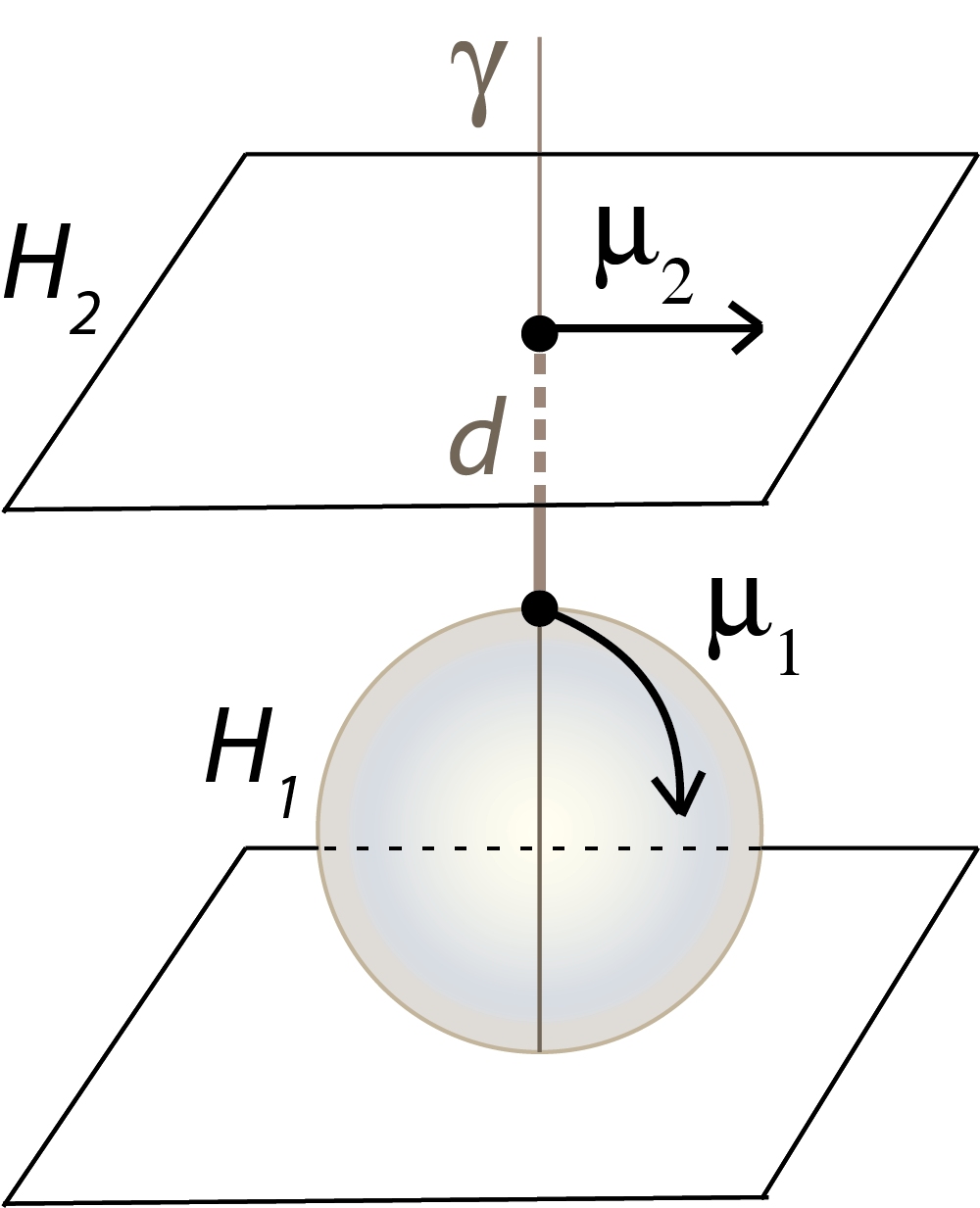} \hspace{0.65 in}
  \caption{Intercusp distance}
  \label{fig:intercusp}
\end{figure}

%  $w(H_1, H_2)$ corresponding
% to horospheres $H_1$, $H_2$ as follows. Let $\gamma$ be a geodesic
% connecting centers of horospheres $H_1$ and $H_2$. Fig.~1 illustrates
% this for the case when one of the horospheres is $H_\infty$. Let the
% modulus of $w(H_1, H_2)$ be $e^{-d}$, where $d$ is the hyperbolic
% length of the part of $\gamma$ between $H_1$ and $H_2$.

Observe that if we position one of $H_1$ and $H_2$ as $H_\infty$ and
the other with center at $0$ then the matrix
\begin{equation}
  \label{eq:intercusp}
  %\Small\setlength{\arraycolsep}{3pt}
M(H_1,H_2):=\begin{pmatrix}0&w(H_1,H_2)\\1&0\end{pmatrix}\in
\PGL(2,\C)
\end{equation}
 exchanges $H_1$ and $H_2$ taking meridian direction of
$H_1$ to that of $H_2$.

We will also use another complex parameter defined as
follows. Suppose we have three horospheres 
%$H_i , i = 1 , 2 , 3$ with 
$H_1\ne H_2\ne H_3$, and $P_i$ is the center of a horosphere $H_i$ for
$i=1,2,3$. Suppose $H_2$ intersects the geodesics $P_1P_2$ and
$P_2P_3$ in points $N$ and $M$ respectively (see
Fig.~\ref{fig:auxiliary}). Using the affine complex structure on $H_2$
there is a complex number determining a translation mapping $N$ to
$M$.  We call this complex number \emph{the translation
  parameter}\footnote{In \cite{ThistlethwaiteTsvietkova} certain
  translation parameters, sometimes with sign changed, are called ``edge
  labels''.}  $u(H_1, H_2, H_3)$. Note that if we position $H_2$ as
$H_{\infty}$ then $u(H_1, H_2, H_3)$ will be the complex number that
corresponds to the translation between the centers $P_1,P_3$ of
$H_1,H_3$. Then the matrix
\begin{equation}
  \label{eq:auxiliary}
  %\Small\setlength{\arraycolsep}{3pt}
M(H_1,H_2,H_3):=\begin{pmatrix}1&u(H_1,H_2,H_3)\\0&1\end{pmatrix}\,.
\end{equation}
gives a translation of $H_2$ taking $N$ to $M$.
\begin{figure}[ht]
  \centering
\includegraphics[scale=0.55]{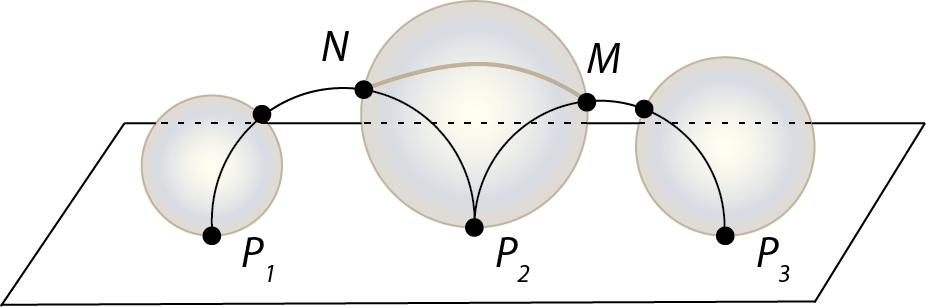} \hspace{0.4 in}
  \caption{Auxiliary parameter}
  \label{fig:auxiliary}
\end{figure}

\section{Properties of the parameters}

We first give versions of Theorems 4.1 and 4.2 of
\cite{ThistlethwaiteTsvietkova} adapted to the orientation conventions
of this paper, along with quick proofs of both.  Let $H_1, \dots, H_n$
be horospheres with $H_i\ne H_{i+1}$ for $i=1,\dots,n-1$ and $H_n\ne
H_1$.  We use the notation of equations \eqref{eq:intercusp} and
\eqref{eq:auxiliary},
\begin{theorem}[Compare Theorem 4.2 of \cite{ThistlethwaiteTsvietkova}]\label{th:cycle relation}
With indices taken modulo $n$,
  $$\prod_{i=1}^n M(H_i,H_{i+1})M(H_i,H_{i+1},H_{i+2})=I\quad
\text{in }\PGL(2,\C)\,.$$
\end{theorem}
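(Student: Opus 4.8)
The plan is to reduce the identity to a telescoping product of explicit matrices by tracking, step by step, a normalized configuration of the horospheres. The key observation is that the matrices $M(H_i,H_{i+1})$ and $M(H_i,H_{i+1},H_{i+2})$ are not abstractly attached to the horospheres: each is the coordinate expression of a specific isometry once the relevant horospheres are placed in the standard positions ($H_\infty$ at $z=1$ centered at $\infty$, and the other centered at $0$). So I would first set up a running normalization: start with $H_1$ placed as $H_\infty$ and $H_2$ centered at $0$. Then the matrix $M(H_1,H_2)=\begin{pmatrix}0&w(H_1,H_2)\\1&0\end{pmatrix}$ carries this configuration to one in which $H_2$ is $H_\infty$ and $H_1$ is centered at $0$, matching meridian directions; this is exactly the content of the sentence following \eqref{eq:intercusp}.

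The second step is to handle the translation factor. After applying $M(H_1,H_2)$ we have $H_2$ at infinity, $H_1$ centered at $0$, and $H_3$ centered at some point of $\C$. By definition $u(H_1,H_2,H_3)$ is, in this position, the translation taking the foot $N$ of $P_1P_2$ on $H_2$ to the foot $M$ of $P_2P_3$; since $P_1=0$ and $P_2=\infty$, the geodesic $P_1P_2$ is the vertical line over $0$, so $N$ sits over $0$, and $M$ sits over $P_3$. Hence $M(H_1,H_2,H_3)=\begin{pmatrix}1&u\\0&1\end{pmatrix}$ translates so that $H_3$ becomes centered at $0$ (while $H_2$ stays at infinity, meridian direction preserved, since a parabolic fixing $\infty$ is an isometry of every horosphere centered at $\infty$ up to translation). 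Thus after the two-matrix block $M(H_1,H_2)M(H_1,H_2,H_3)$ applied to the normalized configuration, we arrive at exactly the analogous normalized configuration for the pair $(H_2,H_3)$: $H_2$ at infinity, $H_3$ centered at $0$, meridians matched. (One must check the orientation/sign conventions so that "meridian to meridian" really does compose correctly — this is where the adaptation to the paper's conventions, noted before the theorem, does its work.)

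Iterating, the composite $\prod_{i=1}^{k} M(H_i,H_{i+1})M(H_i,H_{i+1},H_{i+2})$ carries the initial normalized configuration (for the pair $(H_1,H_2)$) to the normalized configuration for the pair $(H_{k+1},H_{k+2})$. Running $k$ from $1$ to $n$ and using indices mod $n$, the final configuration is the normalized configuration for $(H_{n+1},H_{n+2})=(H_1,H_2)$ — the same configuration we started from, with the same meridian marking. Therefore the total product is an isometry of $\HH^3$ fixing the placement of $H_1$ as $H_\infty$ and $H_2$ as the horosphere centered at $0$, and preserving meridian directions on both; such an isometry is the identity in $\PGL_2(\C)$. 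Hence the product equals $I$.

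The main obstacle I anticipate is bookkeeping of the orientation conventions rather than any deep difficulty: one has to verify carefully that the off-diagonal matrix $M(H_i,H_{i+1})$ and the upper-triangular $M(H_i,H_{i+1},H_{i+2})$ act in the order written (left-to-right vs.\ the geometric composition order), that the "meridian direction" on each horosphere is preserved by the parabolic step (which requires the normalization that meridional translation is the real number $1$ on every lift, as set up in Section~\ref{sec:params}), and that the argument $\theta$ in $\delta$ and the sign in $w=e^{-\delta}$ are consistent with the claim in \eqref{eq:intercusp} that $M(H_1,H_2)$ "takes meridian direction of $H_1$ to that of $H_2$." Once these conventions are pinned down, the proof is a clean telescoping argument with no computation beyond multiplying $2\times 2$ matrices of the two special shapes.
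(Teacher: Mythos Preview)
Your overall strategy---track a normalized configuration through the product and observe that it returns to itself---is exactly the paper's approach. However, your concrete implementation contains an error that matters. With $H_2$ at infinity and $H_1$ centered at $0$, the center of $H_3$ is at $u=u(H_1,H_2,H_3)$, and the matrix $M(H_1,H_2,H_3)=\begin{pmatrix}1&u\\0&1\end{pmatrix}$ acts by $z\mapsto z+u$. This sends $0\mapsto u$, not $u\mapsto 0$; it moves $H_1$ to where $H_3$ was, and moves $H_3$ to $2u$. So your claim that ``$M(H_1,H_2,H_3)$ translates so that $H_3$ becomes centered at $0$'' is false---it is $M(H_1,H_2,H_3)^{-1}$ that does this. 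Consequently the block $M(H_1,H_2)M(H_1,H_2,H_3)$, applied in the geometric order you describe, does \emph{not} carry the normalized $(H_1,H_2)$-configuration to the normalized $(H_2,H_3)$-configuration.

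The paper resolves this by applying $M(H_i,H_{i+1})$ followed by $M(H_i,H_{i+1},H_{i+2})^{-1}$ at each step; the resulting composite (read right to left, as matrix action requires) is then the identity. Inverting that relation and using the key fact you omit---that each $M(H_i,H_{i+1})$ is an involution in $\PGL_2(\C)$, since its square is the scalar $w(H_i,H_{i+1})$---turns the inverses back into the matrices themselves and yields the stated product. You correctly anticipate that the obstacle is bookkeeping rather than depth, but the bookkeeping is not merely a convention check: without the inversion-plus-involution step the telescoping you describe does not close up.
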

\begin{theorem}[Compare Theorem 4.1 of \cite{ThistlethwaiteTsvietkova}]\label{th:shape parameter} Recall $P_i$ denotes the center of $H_i$.
  The shape parameter of the ideal simplex with vertices
  $P_{i-1},P_i,P_{i+1},P_{i+2}$ is
$$\frac{-w(H_i,H_{i+1})}{u(H_{i-1},H_i,H_{i+1})u(H_i,H_{i+1},H_{i+2})}\,.$$  
\end{theorem}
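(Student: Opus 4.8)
The plan is to remove the $\PSL_2(\C)$-ambiguity by choosing a convenient normalization and then to read off the shape parameter as an explicit cross-ratio of the four ideal vertices. (One could instead extract it from the matrix identity underlying Theorem~\ref{th:cycle relation}, but placing the four points directly is cleaner.) First I would choose upper half space coordinates so that $H_i=H_\infty$ and $H_{i+1}$ has center $0$; by the discussion in Section~\ref{sec:params} this determines the coordinates completely, and in them $P_i=\infty$, $P_{i+1}=0$, and $M(H_i,H_{i+1})=\begin{pmatrix}0&w\\1&0\end{pmatrix}$ with $w:=w(H_i,H_{i+1})$. It then remains to locate the remaining two vertices $P_{i-1}$ and $P_{i+2}$ in terms of the parameters.

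For $P_{i-1}$: by definition the matrix $M(H_{i-1},H_i,H_{i+1})=\begin{pmatrix}1&u_-\\0&1\end{pmatrix}$, $u_-:=u(H_{i-1},H_i,H_{i+1})$, acts on $H_i$ as the translation taking the point where the geodesic $P_{i-1}P_i$ meets $H_i$ to the point where $P_iP_{i+1}$ meets $H_i$. Since $P_i=\infty$, these two points have affine coordinates $P_{i-1}$ and $P_{i+1}=0$ respectively, so $P_{i-1}+u_-=0$, i.e.\ $P_{i-1}=-u_-$. For $P_{i+2}$: I would change coordinates by $g:=M(H_i,H_{i+1})$, the M\"obius map $z\mapsto w/z$. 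Because $g$ exchanges $H_i$ and $H_{i+1}$ and carries meridian to meridian, it moves $H_{i+1}$ onto the standard copy of $H_\infty$, while $g(P_i)=0$, $g(P_{i+1})=\infty$, $g(P_{i+2})=w/P_{i+2}$. Repeating the previous computation in this new frame, now with $M(H_i,H_{i+1},H_{i+2})=\begin{pmatrix}1&u_+\\0&1\end{pmatrix}$ and $u_+:=u(H_i,H_{i+1},H_{i+2})$, yields $g(P_{i+2})=u_+$, hence $P_{i+2}=w/u_+$.

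At this stage the four ideal vertices are at $(P_{i-1},P_i,P_{i+1},P_{i+2})=(-u_-,\infty,0,w/u_+)$, and the shape parameter of the resulting ideal simplex is a cross-ratio of these four points; a direct evaluation (in the orientation convention of the statement) gives $-w/(u_-u_+)$, as claimed. The two places where care is needed are: verifying that pushing forward by $M(H_i,H_{i+1})$ really does return $H_{i+1}$ to the standard $H_\infty$ with meridian in the real direction, so that the translation-parameter computation is valid in the second frame; and tracking the cross-ratio and orientation conventions carefully enough to land on $-w/(u_-u_+)$ rather than on one of the other two values permitted by the three-fold shape ambiguity (such as $1/(1+w/(u_-u_+))$). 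Everything else is routine algebra.
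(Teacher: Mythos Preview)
Your proposal is correct and follows essentially the same approach as the paper: normalize so that $H_i=H_\infty$ and $P_{i+1}=0$, read off $P_{i-1}=-u_-$ from the definition of the translation parameter, apply the involution $M(H_i,H_{i+1})\colon z\mapsto w/z$ to locate $P_{i+2}$, and compute the cross-ratio. The only cosmetic difference is that the paper stays in the second frame (with $H_{i+1}$ at $\infty$) and computes the cross-ratio of $(-w/u_-,0,\infty,u_+)$ there, whereas you pull $P_{i+2}=w/u_+$ back to the first frame and compute the cross-ratio of $(-u_-,\infty,0,w/u_+)$; since the two quadruples differ by a M\"obius transformation this is immaterial.
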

As in  \cite{ThistlethwaiteTsvietkova} we use the shape parameter given
by the cross ratio
$$\zeta :=\frac{(P_{i-1}-P_i)(P_{i+1}-P_{i+2})}{(P_{i-1}-P_{i+1})(P_{i}-P_{i+2})}\,,$$
which gives the parameter associated to the edge $P_iP_{i+1}$. In the
literature this $\zeta$ is often associated with the vertex ordering
$P_i,P_{i+1},P_{i+2},P_{i-1}$.
\begin{proof}[Proof of Theorem \ref{th:cycle relation}]
  Start with $H_1$ positioned as $H_\infty$ and $H_2$ centered at
  $0$. Apply the isometry given by $M(H_1,H_2)$ which exchanges $H_1$
  and $H_2$, so now $H_2$ is positioned at $H_\infty$ with $H_1$
  centered at $0$. Next apply $M(H_1,H_2,H_3)^{-1}$ which translates
  $H_2$ to move $H_3$ to have center $0$. So now $H_2$ is positioned
  as $H_\infty$ and $H_3$ centered at $0$. Now repeat with
  $M(H_2,H_3)$ followed by $M(H_2,H_3,H_4)^{-1}$ to get $H_3$
  positioned as $H_\infty$ and $H_4$ centered at $0$. After $n$ such
  steps we are back to the original positioning, so
$$ M(H_n,H_1,H_2)^{-1}M(H_n,H_1)\dots
M(H_1,H_2,H_3)^{-1}M(H_1,H_2)=I\,.$$
Taking inverse of this equation (and keeping in mind that
$M(H_i,H_j)$ is an involution) gives the desired result.
\end{proof}
\begin{proof}[Proof of Theorem \ref{th:shape parameter}]Recall that
  indices are modulo $n$.  We take $i=0$ and set $w=w(H_1,H_2)$,
  $u'=u(H_0,H_1,H_2)$ , $u=u(H_1,H_2,H_3)$, and we start with $H_1$
  positioned as $H_\infty$ and $H_2$ centered at $0$ as before. Then
  $H_0$ is centered at $-u'$. Apply $M(H_1,H_2)$.  Then the center of
  $H_0$ has been moved to $-w/u'$, $H_1$ is centered at $0$ and $H_2$
  is positioned as $H_\infty$.  Now $H_3$ is centered at $u$. Taking the
  cross-ratio of $P_0=-w/u', P_1=0, P_2=\infty, P_3=u$ gives the
  result.
\end{proof}
\begin{theorem}\label{th:2.1}
  The intercusp parameters $w(H_i,H_j)$ and the translation parameters
  $u(H_i,H_j,H_k)$ lie in the invariant trace field $k(M)$.
\end{theorem}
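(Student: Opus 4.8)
The plan is to prove both assertions by a direct computation with explicit matrices, using only the definition $k(M)=\Q\bigl(\tr(\gamma):\gamma\in\Gamma^2\bigr)$ and avoiding any appeal to ideal triangulations. The key input is that, since each $H_i$ is a lift of a cusp cross-section, the chosen meridian of that cusp is represented by a loop in $M$ and hence is a (parabolic) element of $\Gamma$; moreover, in the normalized coordinates of Section~\ref{sec:params} its matrix is pinned down by the very equations \eqref{eq:intercusp} and \eqref{eq:auxiliary} that define $w$ and $u$. The single point to keep in mind is that, to land in the \emph{invariant} trace field rather than merely the trace field, one must take traces only of elements of the group $\Gamma^2$; concretely this forces us to replace each meridian $T$ by its square $T^2$, which is the source of the harmless rational factors below.

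For the intercusp parameters I would normalize so that $H_i=H_\infty$, so that the meridian of the cusp with centre $P_i=\infty$ is $T_i=\left(\begin{smallmatrix}1&1\\0&1\end{smallmatrix}\right)\in\Gamma$, and place $H_j$ with centre $0$. By the observation accompanying \eqref{eq:intercusp}, the involution $M(H_i,H_j)=\left(\begin{smallmatrix}0&w\\1&0\end{smallmatrix}\right)$ interchanges $H_i$ and $H_j$ carrying meridian direction to meridian direction, so conjugating $T_i$ by it yields (up to inversion) the meridian $T_j\in\Gamma$ of the cusp with centre $P_j=0$; a one-line computation gives $T_j=\left(\begin{smallmatrix}1&0\\\pm 1/w&1\end{smallmatrix}\right)$ with $w=w(H_i,H_j)$. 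Since $T_i^{2},T_j^{2}\in\Gamma^2$, the number $\tr\bigl(T_i^{2}T_j^{2}\bigr)=2\pm 4/w$ lies in $k(M)$, and as $k(M)$ is a field containing $\Q$ with $w\neq 0$ we get $w(H_i,H_j)\in k(M)$.

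For the translation parameters I would normalize instead so that $H_j=H_\infty$; then $u(H_i,H_j,H_k)=P_k-P_i=:b-a$ is just the displacement between the Euclidean centres of $H_i$ and $H_k$. The meridian of the cusp with centre $\infty$ is again $T=\left(\begin{smallmatrix}1&1\\0&1\end{smallmatrix}\right)\in\Gamma$, while the meridians $A,B\in\Gamma$ of the cusps with centres $a$ and $b$ are trace-$2$ parabolics fixing $a$, respectively $b$, hence of the form $A=I+\alpha N_a$, $B=I+\beta N_b$ with $N_c=\left(\begin{smallmatrix}-c&c^{2}\\-1&c\end{smallmatrix}\right)$ and $\alpha,\beta\in\C^\ast$. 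Short trace computations with products of $T^{2},A^{2},B^{2}\in\Gamma^2$ then put successively $\alpha,\beta$ (from $\tr(T^{2}A^{2})=2-4\alpha$ and its analogue) and $\alpha\beta(a-b)$ (from $\tr(A^{2}T^{2}B^{2})-\tr(B^{2}T^{2}A^{2})=16\,\alpha\beta(a-b)$) into $k(M)$; dividing the last by $\alpha\beta\in k(M)^\ast$ gives $b-a=u(H_i,H_j,H_k)\in k(M)$. A single such relation will not suffice, since it also involves $(a-b)^{2}$, and taking the difference of two of them is what cancels the quadratic term.

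The computations are routine; the only genuine obstacle is verifying that the matrices written down really lie in $\Gamma$, not merely in $\PGL_2(\C)$ — this rests on the meridian being a (parabolic) element of $\Gamma=\pi_1(M)$ and on the coordinate normalizations used here being exactly those of Section~\ref{sec:params}, so that the entries $1/w$ and $a,b$ of the matrices are the parameters we want. One could alternatively obtain $w\in k(M)$ from Theorem~\ref{th:shape parameter} and the theorem of Neumann--Reid that $k(M)$ is generated by the tetrahedron shapes of any ideal triangulation, but Theorem~\ref{th:shape parameter} only delivers the combination $-w/(u\,u')$, so one would still need an independent handle on the translation parameters; the direct argument sidesteps this and treats both families of parameters uniformly.
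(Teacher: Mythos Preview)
Your argument is correct, and it takes a genuinely different route from the paper. The paper first proves $u(H_i,H_j,H_k)\in k(M)$ by invoking \cite[Theorem~2.4]{NeumannReid}: positioning $H_j$ as $H_\infty$ and $H_i$ with centre $0$, the meridional translate of $H_i$ has centre $1$, so all parabolic fixed points lie in $k(M)\cup\{\infty\}$ and hence $u=P_k\in k(M)$; then it deduces $w(H_i,H_j)\in k(M)$ from Theorem~\ref{th:shape parameter} together with the fact that simplex shapes lie in $k(M)$. Your approach bypasses both of these inputs by working directly with traces of products of squared meridians, which are elements of $\Gamma^2$ by construction. The paper's proof is shorter and more conceptual, but it leans on two external facts (the Neumann--Reid positioning theorem and the shape-field theorem); your proof is self-contained from the definition of $k(M)$ and makes transparent exactly which group elements witness membership of each parameter in the field. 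The one delicate point in your argument---that $MT_iM^{-1}$ actually lies in $\Gamma$ even though $M=M(H_i,H_j)$ does not---is handled correctly: the conjugate is a parabolic fixing $P_j$ that translates $H_j$ by unit meridional length, hence coincides with the meridian $T_j^{\pm1}\in\Gamma$ by the normalisation of Section~\ref{sec:params}.
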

\begin{proof}
  Let $\mathcal P$ be the set of all centers of horospheres which lift
  from cusp cross-sections of $M$. In \cite[Theorem 2.4]{NeumannReid}
  it is shown that if three points of $\mathcal P$ are positioned at
  $0$, $1$ and $\infty$ then $\mathcal P$ is positioned as subset of 
  $k(M)\cup\{\infty\}\subset \C\cup\{\infty\}$.

  To see $u(H_i,H_j,H_k)\in k(M)$ we position $H_j$ as $H_\infty$ and
  $H_i$ with center at $0$. Then there is also a horosphere centered
  at $1$ so $\mathcal P$ is positioned as a subset of
  $k(M)\cup\{\infty\}$. So
  $u(H_i,H_j,H_k)=P_k-P_i=P_k$ is in $k(M)$.

Since shape parameters are also in $k(M)$, it now follows from Theorem
  \ref{th:shape parameter} applied to a simplex with vertices $P_h,
  P_i, P_j, P_k$ that $w(H_i,H_j)$ is in $k(M)$. 
\end{proof}
If the image $\gamma_{ij}$ in $M$ of an intercusp geodesic arc
$\gamma(H_i,H_j)$ is embedded we call $\gamma_{ij}$ an
\emph{intercusp arc} of $M$ and if the line segment in an $H_j$
joining the endpoints of a $\gamma(H_i,H_j)$ and a $\gamma(H_j,H_k)$
has embedded image $\gamma_{ijk}$ in $M$ we call $\gamma_{ijk}$ a
\emph{cusp arc} of $M$.

\begin{theorem}\label{th:2.2} 
  Suppose $X\subset M$ is a union of cusp arcs and pairwise disjoint
  intercusp arcs, where any intercusp arcs which are not disjoint 
have been bent 
    slightly near intersection points to make them disjoint,
  and suppose $\pi_1(X)\to \pi_1(M)$ is surjective
  (equivalently, the lift $\tilde X\in \HH^3$ is connected). Then the
  intercusp and translation parameters corresponding to these arcs
  generate the invariant trace field.
\end{theorem}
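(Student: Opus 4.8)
The plan is to show that the group generated by the motions associated to a closed cycle of arcs in $X$ lands inside a conjugate of $\Gamma$ together with the parabolic meridian subgroups, and then to run the argument of Theorem~\ref{th:2.1} in reverse: rather than deducing the parameters lie in $k(M)$ from the ambient positioning, we produce enough group elements whose entries are built from the parameters so that the field they generate is all of $k(M)$.

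First I would set up coordinates. Pick a basepoint horosphere among the lifts, position it as $H_\infty$ and an adjacent one centered at $0$, and (after possibly including one more arc, which does not change the generated subgroup of $\pi_1(M)$) arrange a third center at $1$; by \cite[Theorem 2.4]{NeumannReid} the full orbit $\mathcal P$ of horosphere centers then sits in $k(M)\cup\{\infty\}$. Next, to any loop in $X$ based at the chosen cusp — that is, any cyclic word in intercusp and cusp arcs returning to the start — associate the product $\prod M(H_i,H_{i+1})\,M(H_i,H_{i+1},H_{i+2})^{\pm1}$ exactly as in the proof of Theorem~\ref{th:cycle relation}, except that now the cycle need not close up in $\HH^3$: it closes up in $M$, so the product is an element of $\Gamma$ (the deck transformation realizing that loop), possibly pre- and post-composed by meridional translations coming from the choice of where on each cusp torus the arcs meet. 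These meridional translations are $\begin{pmatrix}1&1\\0&1\end{pmatrix}$-type matrices, hence parabolic with trace $2$, and in any case have entries in $\Q\subseteq k(M)$.

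The key step is then the field-generation argument. Let $F$ be the field generated over $\Q$ by all the $w$'s and $u$'s attached to arcs of $X$. Every matrix $M(H_i,H_{i+1})$ and $M(H_i,H_{i+1},H_{i+2})$ has entries in $F$, and the meridional corrections have entries in $\Q\subseteq F$, so every element of $\Gamma$ arising from a loop in $X$ is represented by a matrix with entries in $F$; since $\pi_1(X)\to\pi_1(M)$ is surjective, these elements generate $\Gamma$ up to the normal subgroup they generate together with the peripheral subgroups, but in fact surjectivity on $\pi_1$ means they generate a subgroup of $\Gamma$ whose image is everything, so after conjugating $\Gamma$ into this matrix form we get $\Gamma\subseteq \PGL_2(F)$. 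Traces of elements of $\Gamma^2$ are then polynomials in the parameters, giving $k(M)\subseteq F$; combined with Theorem~\ref{th:2.1}, which gives $F\subseteq k(M)$, we conclude $F=k(M)$. The connectedness of $\tilde X$ is what lets us realize every deck transformation as such a word, since a path in $\tilde X$ from a lift of the basepoint to any of its $\Gamma$-translates projects to a loop in $X$.

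The main obstacle I expect is bookkeeping the meridional ambiguity honestly. Each arc of $X$ meets a cusp torus at a definite point, but when we lift and march around a loop, the horosphere we return to is a $\Gamma$-translate of the one we started on, and the affine identification of that horosphere with $\C$ may differ from the original by a translation that is an integer combination of the meridian and the longitude — and the longitude translation is not in $\Q$ in general. I would handle this by arranging the arcs of $X$ so that each cusp arc is used with both endpoints on the same lift in a consistent way, or equivalently by absorbing the longitude translations: a longitude translation is a product of conjugates of the meridian translation by elements already realized as words in the $M$'s, so its entries still lie in $F$, and it is parabolic so contributes nothing new to traces. One must check that this absorption is circular-free — that no parameter's membership in $F$ is being used to prove a longitude lies in $F$ which is then used to prove that parameter lies in $F$ — but since longitudes are genuine elements of $\Gamma$ and $\Gamma\subseteq\PGL_2(k(M))$ already by Theorem~\ref{th:2.1}, there is no circularity. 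A secondary point to verify is that bending non-disjoint intercusp arcs does not disturb anything: the bend is a homotopy rel endpoints in $M$, so it changes neither $\pi_1(X)$ nor the arcs' endpoints on the cusp tori, hence neither the parameters nor the motions $M(\cdot,\cdot)$.
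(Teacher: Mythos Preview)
Your core approach is the paper's: write each deck transformation as a product of the matrices $M(H_i,H_j)$ and $M(H_i,H_j,H_k)$, conclude $\Gamma\subset\PGL_2(F)$ where $F$ is the field generated by the parameters, and invoke \cite[Theorem 2.1]{NeumannReid}. Two comments on the execution.

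First, the ``meridional/longitudinal ambiguity'' you devote half the proposal to is a phantom. The cusp arcs in $X$ are fixed arcs in $M$ with fixed endpoints; each lift of such an arc to $\tilde X$ connects specific endpoints of specific lifts of intercusp arcs, and its translation parameter is exactly the $u$ attached to it. Following a loop in $X$, the product of the associated $M$-matrices (exactly as in the proof of Theorem~\ref{th:cycle relation}, but now for a cycle that closes in $M$ rather than in $\HH^3$) \emph{is} the corresponding deck transformation on the nose---no pre- or post-composition by peripheral elements occurs. In particular there is no need to realize longitudes as products of conjugates of meridians (which is false for general $M$ anyway). Your last paragraph, and the positioning of a third center at $1$, can simply be dropped.

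Second, you assert that traces of elements of $\Gamma^2$ are ``polynomials in the parameters,'' but this skips the one point that does need care: the matrices $M(H_i,H_j)$ have determinant $w(H_i,H_j)$, so your products land in $\GL_2(F)$, not $\SL_2(F)$, and the $\PSL$ trace of $A\in\GL_2(F)$ is $\pm\trace(A)/\sqrt{\det A}$, which need not lie in $F$. The paper isolates exactly the fix: for $A\in\GL_2(F)$ the matrix $(\det A)^{-1}A^2$ lies in $\SL_2(F)$ and represents the square of $A$ in $\PSL_2(\C)$, so the $\PSL$ trace of the square does lie in $F$. Once you insert this one-line observation, your argument is complete and coincides with the paper's.
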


\begin{proof}
If $k$ is a field then the square of an
element
 $\Small\setlength{\arraycolsep}{3pt}\begin{pmatrix}a&b\\c&d
\end{pmatrix}
\in \PGL_2(k)$ 
equals
$\Small\frac1{ad-bc}\setlength{\arraycolsep}{3pt}\begin{pmatrix}a&b\\c&d
\end{pmatrix}^2\in
\PSL_2(k)$ and hence has $\PSL$ trace in $k$.

The conditions on $X$ imply that each covering
transformation in $\Gamma$ of the covering map $\HH^3\to M$ is a
product of matrices of the form $M(H_i,H_j)$ or $M(H_i,H_j,H_k)$. It
is therefore in $PGL_2(k(M))$, so its square has PSL trace in
$k(M)$. By \cite[Theorem 2.1]{NeumannReid} the traces of squares of
elements of $\Gamma$ generate the invariant trace field.
\end{proof}

\newpage

\section{Geometric applications}

\subsection{Zickert's truncated triangulations}\label{subsec:zickert}
In \cite{Zickert} Christian Zickert considers an ideal triangulation
of $M$ with the simplices truncated by removing horoballs centered at
the vertices of the ideal simplices. He allows horoballs of any size,
but we will use the ones normalized as in Section \ref{sec:params}.  He
uses a labelling of these truncated simplices to give a particularly
simple computation of the extended Bloch class and complex volume of
$M$. The label $g_{ij}$ on a long edge of a truncated tetrahedron of
the triangulation 
(see Fig.~\ref{fig:Z}) is a matrix of the form
$\Small\setlength{\arraycolsep}{3pt}\begin{pmatrix}
  0&-\alpha^{-1}\\\alpha&0
\end{pmatrix}$ 
and a label $\alpha_{ik}^j$ on a short edge has the form
$\Small\setlength{\arraycolsep}{3pt}\begin{pmatrix}
  1&u\\0&1
\end{pmatrix}$. 
Note that 
$\Small\setlength{\arraycolsep}{3pt}\begin{pmatrix}
  0&-\alpha^{-1}\\\alpha&0
\end{pmatrix}$ 
can be written as 
$\Small\setlength{\arraycolsep}{3pt}\begin{pmatrix}
  0&-\alpha^{-2}\\1&0
\end{pmatrix}\in\PGL_2(\C)$. 
It is not hard to check that $-\alpha^2$ represents the intercusp
parameter $w(H_i,H_j)$ and $u$ represents the translation parameter
$u(H_i,H_j,H_k)$, so with the horoballs chosen as in this paper,
Zickert's parameters give elements of the form $\alpha^2$ and $u$ in
$k(M)$.

\begin{figure}[h]
\centering
{\includegraphics[width=5.9cm]{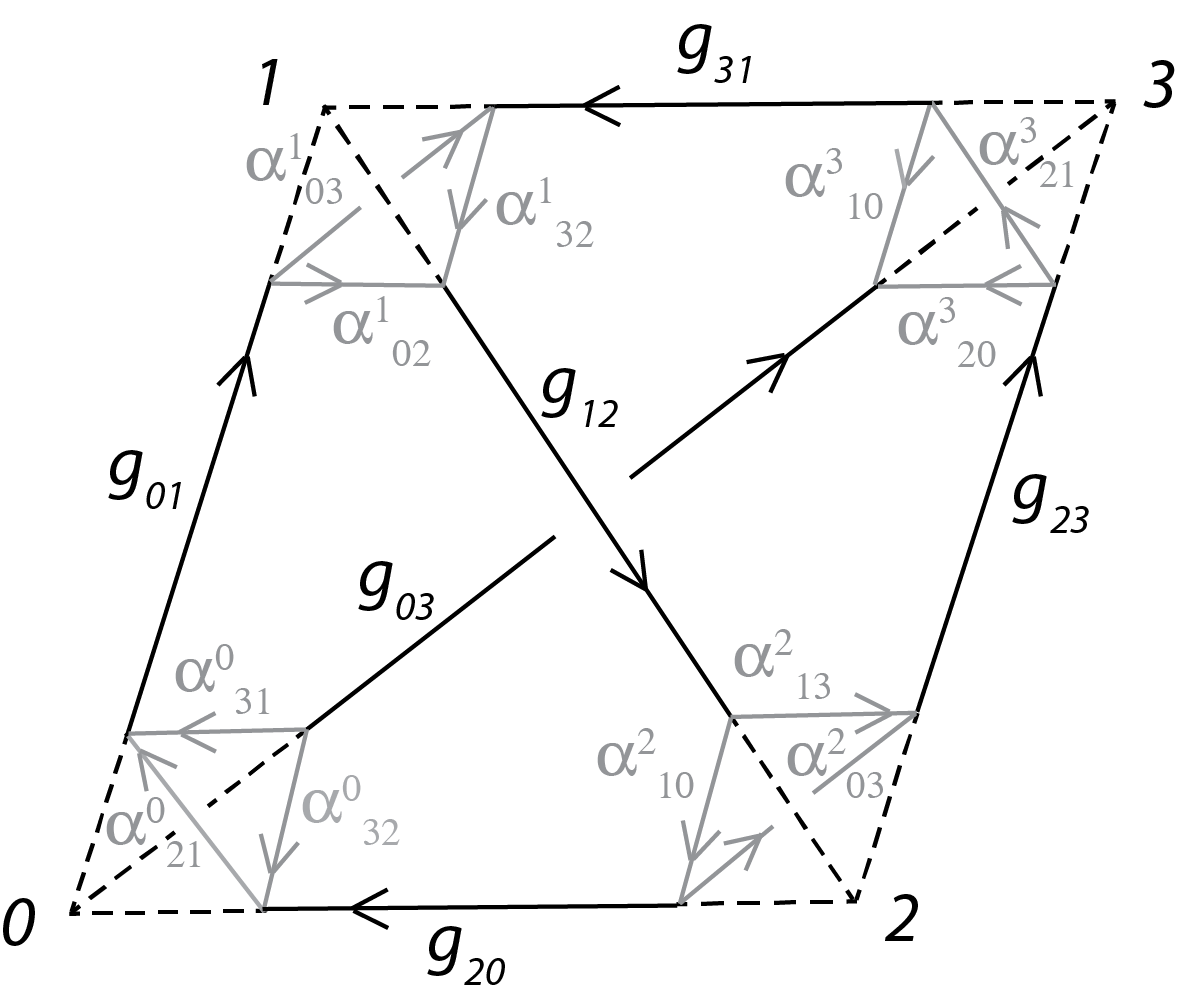}}
\caption{Zickert's truncated tetrahedron with labels}
\label{fig:Z}
\end{figure}

\subsection{Parametrizing hyperbolic structure of link
    complements by complex labels}\label{subsec:TT}

In \cite{ThistlethwaiteTsvietkova}, a new method for computing
hyperbolic structure of links is suggested. It parametrizes horoball
structure using complex labels, which then can be found from a link
diagram that satisfies a few mild restrictions. The method is based on
ideal polygons corresponding to the regions of a link diagram rather
than decomposition of the complement into ideal
tetrahedra. We will
proceed by defining the labels; the method is described after that.
%in detail in Section \ref{sec:4}.

Suppose that $M$ is a link complement and that the link has a reduced
diagram $D$ such that every arc from an overpass to an underpass
  of a crossing is properly homotopic to a geodesic in $M$.
Conjecturally,
every hyperbolic link admits such a diagram; for now it has been
proved that every hyperbolic alternating link does (see
\cite{ThistlethwaiteTsvietkova} for a discussion and for the
sufficient conditions on checkerboard surfaces). Existence of such a
diagram for a link guarantees the applicability of the method.

The boundary of a $k$-sided region $R$ of the diagram $D$ is a union
of $k$ arcs on the boundary torus (we call them \emph{edges of $R$})
and $k$ arcs, each of which goes from an overpass to an underpass of a
crossing. Suppose $\Pi_R$ is a preimage of $R$ in $\mathbb{H}^3$. Then $\Pi_R$ is a cyclic sequence of $k$ segments of geodesics connecting the ideal points $P_1 , \dots , P_{k}$ in $\mathbb{
H}^3$, and $k$ Euclidean segments on the corresponding horospheres $H_1, \dots , H_k$ (Fig.~\ref{fig:RP}).

\begin{figure}[h]
\centering
{\includegraphics[width=7.1cm]{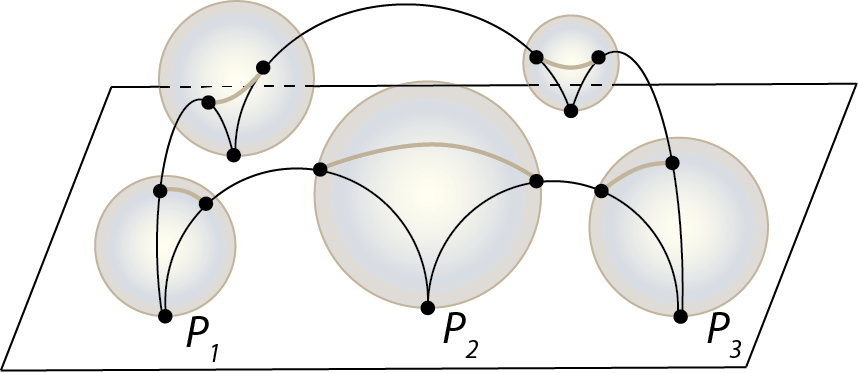}}
\caption{A preimage of the boundary of a 5-sided region of a link diagram}
\label{fig:RP}
\end{figure}

Each geodesic $P_iP_{i+1}$ meets $H_i$ and $H_{i+1}$ in points
$M_i$ and $N_{i+1}$ respectively.  An orientation of the link
determines a direction of the corresponding translation along the Euclidean line segment on $H_i$ in $\mathbb{H}^3$
  joining $M_i$ with
$N_i$. The corresponding translation parameter ($u(H_{i-1}, H_i,
H_{i+1})$ or $u(H_{i+1}, H_i, H_{i-1})=-u(H_{i-1}, H_i,
H_{i+1})$, depending on the orientation)
is called an \emph{edge label} in \cite{ThistlethwaiteTsvietkova}
and is affixed to the side of the
corresponding edge of $R$.  A \emph{crossing label}, affixed to the
crossing arc (or just to the corresponding crossing) that lifts to the
geodesic $P_iP_{i+1}$, is the negative of the intercusp parameter
$w(H_i, H_{i+1})$.

A set of equations for edge and crossing labels,  used to compute the hyperbolic structure on the link complement, is
  given in \cite{ThistlethwaiteTsvietkova}. They consist of three equations resulting from the matrix relation equivalent to the one of Theorem \ref{th:cycle relation}
  for each $\Pi_R$, and, for each arc between two crossings of the
  link diagram, an equation relating the values of the two edge labels
  corresponding to the regions on the two sides of the arc (for an
  alternating link the the equation just says that the two labels
  differ by 1).

%The equations for edge and crossing labels, alternative to the gluing and completeness equations, are derived in \cite{ThistlethwaiteTsvietkova}. The approach utilizes isometries of $\Pi_R$, and the resulting equations can be written just by looking at a reduced link diagram.

 %The negative sign comes from the different conventions for viewing the horospherical torus (in \cite{ThistlethwaiteTsvietkova}, the viewer is situated between the cusps, and thus in a situation shown at Fig.2 one of the meridians is oriented towards the viewer, while the other - away from the viewer).

%Note that given a polyhedral decomposition $\mathcal{T}$ of a hyperbolic 3-manifold $M$ with all vertices ideal, we can similarly introduce labels of its long and short edges. By \cite{Epstein}, at least one decomposition of $M$ into ideal tetrahedra exists. The long edges (in the terminology of \cite{Menasco}) correspond to geodesics between horospheres centered at ideal vertices, and the short ones correspond to edges of Eucledean polygons cut off by the horosphere sections. Given the long edge between the horospheres $H_i , H_{i+1}$, we can assign the intercusp parameter $w(H_i, H_{i+1})$ to it. For every short edge, we first have to choose an orientation. Then we can assign the auxiliary complex parameter to it that corresponds to the Euclidean translation along the edge on the corresponding cutting horosphere. These labels in general are not related to a link diagram anymore. This is reminiscent of the labeling scheme for a triangulation of a hyperbolic 3-manifold suggested in \cite{Zickert} .

To summarize, instead of the traditional gluing and
  completeness relations based on shape parameters for an ideal
  triangulation, %of the link complement, 
we now use two types of complex parameters: the intercusp
parameter $w(H_i, H_{i+1})$ describing %that contains the information about
distance and angle between two chosen cusps
and the translation parameter $u(H_{j-1}, H_j, H_{j+1})$ describing
%that has information about 
how the horospheres are situated with respect to
each other. Theorem \ref{th:2.1} showed that these 
parameters are elements of the invariant trace field. We have a
  finite number of such parameters describing the geometric
structure of $M$, either using
 the labels that are assigned to a link
diagram (edge and crossing labels) or labels
assigned to a polyhedral decomposition (Zickert's parameters).

\subsection{Generating the invariant trace field}
It is not hard to see that Theorem \ref{th:2.2} applies in both the
above cases, so we have:
\begin{proposition}\label{prop:4.1}
  The collection of parameters described above generates the invariant
  trace field in the situations of subsections \ref{subsec:zickert} and
  \ref{subsec:TT}\qed
\end{proposition}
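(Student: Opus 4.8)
The plan is to deduce Proposition \ref{prop:4.1} directly from Theorem \ref{th:2.2} by exhibiting, in each of the two situations, a set $X$ of arcs satisfying the hypotheses of that theorem whose associated intercusp and translation parameters are exactly the ones under discussion.

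First I would treat the Zickert triangulation of subsection \ref{subsec:zickert}. Here the natural candidate for $X$ is the union of all long edges of the truncated triangulation together with all short edges. The long edges are the truncated intercusp geodesic arcs $\gamma(H_i,H_j)$, and by the computation in subsection \ref{subsec:zickert} they carry the parameter $w(H_i,H_j)=-\alpha^2$; the short edges are cusp arcs lying on the horospherical tori and carry the translation parameters $u$. Truncation removes the ideal vertices at which edges of an ideal triangulation meet, so the long edges are pairwise disjoint after truncation, and any two that still cross may be bent apart as permitted in Theorem \ref{th:2.2}. It then remains only to check that the lift $\tilde X$ is connected, equivalently that $\pi_1(X)\to\pi_1(M)$ is surjective; this holds because $X$ is precisely the $1$-skeleton of the truncated triangulation, which is a CW structure on the compact core $M\setminus(\text{open horoballs})$, a deformation retract of $M$, and the inclusion of the $1$-skeleton of a connected CW complex induces a surjection on fundamental groups. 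Theorem \ref{th:2.2} then gives the claim in this case.

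For the link-diagram situation of subsection \ref{subsec:TT} I would take $X$ to be the union over all crossings of the crossing arcs (from overpass to underpass) together with the union over all regions $R$ of the edges of $R$ lying on the boundary tori. By the standing hypothesis on the diagram each crossing arc is properly homotopic to a geodesic, which we use as its geodesic representative; these geodesics are the intercusp arcs carrying the crossing labels $-w(H_i,H_{i+1})$, and the boundary-torus edges are the cusp arcs carrying the edge labels $\pm u(H_{i-1},H_i,H_{i+1})$. Crossing arcs at distinct crossings are supported in disjoint neighbourhoods of those crossings, hence are disjoint, and the bending clause of Theorem \ref{th:2.2} disposes of any residual intersections among the geodesic representatives. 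Finally, as in \cite{ThistlethwaiteTsvietkova} the ideal region-polygons $\Pi_R$ assemble into a cell structure on a deformation retract of $M$ whose $1$-skeleton is exactly $X$; hence $\pi_1(X)\to\pi_1(M)$ is onto, $\tilde X$ is connected, and Theorem \ref{th:2.2} again applies.

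The main obstacle I anticipate is the connectivity/surjectivity verification in each case: one must be sure that the long-and-short edges (respectively the crossing arcs and region edges) form the $1$-skeleton of a cell structure on a deformation retract of $M$, rather than merely some proper subcomplex, so that the induced map on $\pi_1$ is genuinely surjective. In the Zickert case this is essentially built into the definition of a truncated triangulation; in the Thistlethwaite--Tsvietkova case it rests on the structure of the complex assembled from the region polygons. This is the one point that uses the specific geometry of the two constructions rather than the formal machinery of Section 3; once it is in place, disjointness-after-bending and the identification of the labels with the invariant-trace-field parameters $w$ and $u$ are immediate from the preceding sections.
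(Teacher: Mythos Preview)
Your proposal is correct and follows exactly the paper's approach: the paper's entire proof is the sentence ``It is not hard to see that Theorem \ref{th:2.2} applies in both the above cases,'' and you have supplied the details the authors left implicit. Your verification that in each situation the arcs form the $1$-skeleton of a CW structure on a deformation retract of $M$, hence carry $\pi_1$ surjectively, is precisely what is needed to invoke Theorem \ref{th:2.2}.
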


{The number of parameters needed in the above proposition can be
reduced as follows. A collection of intercusp arcs in $M$ is a
\emph{tunnel collection} if the arcs  can be isotoped fixing their
endpoints so that they are disjoint and the  result of removing open horoball
neighborhoods of the cusps and tubular neigborhoods of the 
arcs is a handlebody. The collections of intercusp arcs used in
Proposition \ref{prop:4.1} is a tunnel collection, but usually a small
subset of these intercusp arcs already is. For example, any two-bridge link has a tunnel
collection consisting of a single intercusp arc.
\begin{proposition}
  If $M$ has a tunnel collection consisting of $k$ intercusp arcs,
  then the invariant trace field can be generated by the $k$ intercusp
  parameters of these arcs together with $2k$ translation parameters.
\end{proposition}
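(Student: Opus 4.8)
The strategy is to reduce to Theorem~\ref{th:2.2}: I would build a connected one-complex $X\subset M$ that is a union of cusp arcs together with the $k$ (pairwise disjoint) intercusp arcs of the tunnel collection, with $\pi_1(X)\to\pi_1(M)$ surjective, using only $2k$ cusp arcs. First, isotope the tunnel arcs $\gamma_1,\dots,\gamma_k$ to be disjoint, let $\hat M$ be $M$ with open horoball neighborhoods of the cusps removed, and let $W\subset\hat M$ be the handlebody obtained by further removing open tubular neighborhoods of the $\gamma_l$. Since $\chi(\hat M)=0$ and deleting a tubular neighborhood of a properly embedded arc drops Euler characteristic by $1$, one gets $\chi(W)=-k$, so $W$ has genus $k+1$; equivalently $\hat M$ is recovered from $W$ by attaching $k$ two-handles along annular neighborhoods of the meridians of the tunnel tubes. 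Hence $\pi_1(W)=F_{k+1}$ surjects onto $\pi_1(M)=\pi_1(\hat M)$, so $\pi_1(M)$ has a generating set of $k+1$ elements; and since $\partial W$ is connected, the graph whose vertices are the cusps and whose edges are the $\gamma_l$ must be connected.

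Now I would assemble $X$. Take the $k$ intercusp arcs $\gamma_l$; they meet the cusp cross-section tori in $2k$ points. Because the tunnel graph is connected, one can add at most $k-1$ cusp arcs on the cusp tori joining these points so that $X$ becomes connected; this leaves a budget of at least $k+1$ more cusp arcs. Adjoin these remaining cusp arcs, each closing up to a loop in $X$ by way of the already-connected part, chosen so that the resulting loops represent a generating set of $\pi_1(M)$; this brings the cusp-arc count to exactly $2k$. That $k+1$ further cusp arcs suffice is exactly where the tunnel hypothesis enters: $\pi_1(M)$ is generated by the $k+1$ free generators of $\pi_1(W)$, each of which is represented by a loop on $\partial W$ (since $\pi_1(\partial W)\to\pi_1(W)$ is onto for a handlebody), and $\partial W$ is glued together out of subsurfaces of the cusp tori and the lateral walls of the tunnel tubes; the latter deformation retract onto the cores $\gamma_l$, so after a homotopy in $M$ such a loop runs along the $\gamma_l$ and along arcs on the cusp tori, and---after absorbing the ambiguity against the $k$ two-handle relations---can be pushed into a loop of $X$ of the required form.

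Once $X$ is in hand, Theorem~\ref{th:2.2} applies directly: $X$ is a union of cusp arcs and pairwise disjoint intercusp arcs with $\pi_1(X)\twoheadrightarrow\pi_1(M)$, so the parameters attached to its arcs generate $k(M)$---namely the $k$ intercusp parameters $w(H_i,H_j)$ of the tunnel arcs and the $2k$ translation parameters $u(H_i,H_j,H_k)$ of the cusp arcs. The main obstacle is the middle step: explicitly choosing the ``generating'' cusp arcs and verifying that $\pi_1(X)\to\pi_1(M)$ is genuinely onto, not merely of the right rank. For link complements this is clean, since the meridians normally generate $\pi_1(M)$, so $\pi_1(M)$ is generated by $k+1$ meridian conjugates realized by the tunnel arcs together with meridian cusp arcs (these have translation parameter $1$, but still count toward the $2k$), and no ``longitude'' cusp arcs are needed. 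For a general cusped $M$ one must instead argue from the handlebody (or compression-body) structure that the $k+1$ generators of $\pi_1(W)$ admit representatives of this economical form, and I expect that bookkeeping to be the real content of the proof.
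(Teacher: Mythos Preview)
Your strategy is different from the paper's, and the step you flag as ``the real content'' is precisely the step the paper avoids rather than carries out. You try to build $X$ with exactly $2k$ cusp arcs and then must argue that the $k+1$ ``generator'' cusp arcs can be chosen so that $\pi_1(X)\to\pi_1(M)$ is onto; your rank count shows this is numerically possible, but your sketch (push generators of $\pi_1(W)$ to $\partial W$, decompose into torus pieces and tube pieces) does not control how many distinct cusp arcs a single generator loop traverses, so the surjectivity claim is not established.

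The paper sidesteps this entirely. It uses \emph{more} cusp arcs, not fewer: on the $i$-th cusp torus (carrying $s_i$ tunnel-arc endpoints) it takes $s_i+1$ cusp arcs forming a graph whose complement on the torus is a single disc, for a total of $\sum_i(s_i+1)=2k+h$ cusp arcs. With this choice each cusp torus deformation retracts onto its graph, so $X$ carries all of $\pi_1$ of the union of cusp tori and tunnel arcs; since the complement of a regular neighborhood of that union is the handlebody $W$, the inclusion $\pi_1(X)\to\pi_1(M)$ is surjective by a direct van Kampen argument---no delicate choice of generators is needed. The reduction from $2k+h$ to $2k$ parameters is then done \emph{arithmetically}, not topologically: on each cusp torus the meridian is a cycle in the chosen graph, and by the normalization its translation parameter equals $1$; this gives one integral linear relation (with constant term in $\Q$) among the $s_i+1$ translation parameters on that torus, so one of them is redundant as a field generator. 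Eliminating one per cusp leaves $\sum_i s_i=2k$ translation parameters.

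So the key idea you are missing is the meridian-normalization trick: overbuild $X$ to make Theorem~\ref{th:2.2} trivially applicable, then discard $h$ parameters using the relation coming from $\text{(meridian translation)}=1$.
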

\begin{proof} We number the cusps with $i=1,\dots,h$.  Suppose the
  $i$-th horospherical cusp section has $s_i$ endpoints of tunnel arcs
  on it.  We can find a collection of $s_i+1$ cusp arcs connecting
  these endpoints such that their union is a graph whose complement on
  the cusp section is an open disc (as in Fig. \ref{fig:h}). The union of all these cusp arcs
  and isotoped tunnel arcs then satisfies Theorem \ref{th:2.2}.

  The total number of these cusp arcs is $2k+h$. But the $s_i+1$
  translation parameters at the $i$-th cusp section are linearly
  dependent modulo $1$, since a meridian of the cusp section has
  translation parameter $1$. We therefore only need $s_i$ translation
  parameters at the $i$-th cusp in applying Theorem \ref{th:2.2}, for
  a total of $2k$ translation parameters
\end{proof}}

\begin{figure}[t]
\centering
{\includegraphics[width=5.1cm]{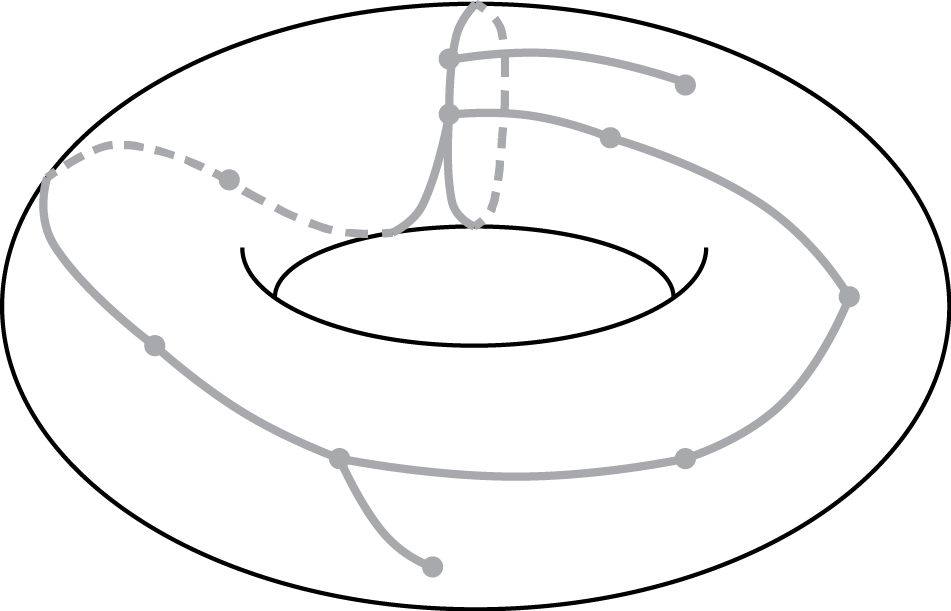}}
\caption{An example of a graph whose complement on
  the cusp section is an open disc}
\label{fig:h}
\end{figure}

In fact, usually a small subset even of the above reduced collection
of labels generates the invariant trace field. For example, in
\cite{Volume} it is shown that for a 2-bridge link there is an ideal
triangulation (in fact the canonical ideal triangulation) for which
the simplex parameters are all rational functions of the crossing
label $w_1$ of the leftmost crossing in the standard alternating
diagram for the link {(this crossing arc is a tunnel)}. By
\cite[Theorem 2.4]{NeumannReid} the simplex parameters of an ideal
triangulation always generate the invariant trace field, so we get:
\begin{proposition} For a two-bridge link the invariant trace field is
  generated by the single crossing label $w_1$ described above.\qed
\end{proposition}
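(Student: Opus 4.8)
The plan is to combine the two facts cited just before the statement, with Theorem~\ref{th:2.1} supplying the reverse inclusion. First I would invoke \cite[Theorem 2.4]{NeumannReid}: for a cusped finite-volume hyperbolic $3$-manifold the shape parameters of \emph{any} ideal triangulation generate the invariant trace field $k(M)$. In particular this holds for the canonical ideal triangulation of a two-bridge link complement, so it suffices to locate the shape parameters of that specific triangulation inside a field generated by $w_1$.

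Second, I would appeal to the computation in \cite{Volume}: for the canonical ideal triangulation of a two-bridge link complement, every simplex parameter is a rational function, with coefficients in $\Q$, of the single crossing label $w_1$ attached to the leftmost crossing of the standard alternating diagram. Hence all shape parameters lie in $\Q(w_1)$, and combining this with the previous step yields the inclusion $k(M)\subseteq \Q(w_1)$.

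For the reverse inclusion I would use Theorem~\ref{th:2.1}. The leftmost crossing arc is a tunnel arc (its intercusp geodesic is the single-arc tunnel collection mentioned above), and its crossing label $w_1$ is, up to sign, the intercusp parameter $w(H_i,H_{i+1})$ of the corresponding intercusp geodesic in the normalization of Section~\ref{sec:params}. Theorem~\ref{th:2.1} then gives $w_1\in k(M)$ (note $w_1=e^{-\delta}\ne 0$, so $\Q(w_1)$ is a genuine subfield), hence $\Q(w_1)\subseteq k(M)$. Together with the first inclusion this gives $k(M)=\Q(w_1)$.

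The only point that requires care — and the closest thing to an obstacle — is reconciling conventions: one must check that the crossing label $w_1$ in \cite{Volume} is literally the intercusp parameter $w(H_i,H_{i+1})$ (with the horoballs normalized as in Section~\ref{sec:params}), so that Theorem~\ref{th:2.1} applies to it, and that the leftmost crossing arc really is an embedded intercusp arc forming a one-element tunnel collection. Both are bookkeeping matters given the framework already set up in subsection~\ref{subsec:TT}, so the proof is short once these identifications are in place.
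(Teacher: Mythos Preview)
Your proposal is correct and follows essentially the same route as the paper: the paper's proof is the paragraph immediately preceding the proposition, which combines \cite[Theorem 2.4]{NeumannReid} with the result from \cite{Volume} exactly as you do. The only addition in your write-up is making the reverse inclusion $\Q(w_1)\subseteq k(M)$ explicit via Theorem~\ref{th:2.1}, which the paper leaves implicit (it is already established earlier that all crossing labels lie in $k(M)$).
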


This proposition implies that for a hyperbolic 2-bridge link, the
polynomial $P$ in $w_1$, obtained by applying the recursive process
described in \cite{Volume}, has a zero
which generates the invariant trace field. 

Note that it is not guaranteed that $P$ is irreducible.  In fact,
suppose a reduced alternating diagram of a hyperbolic two-bridge link
has $k$ twists with $n_1 , n_2 , \dots , n_k$ crossings. The
calculation of \cite{Volume} gives an upper bound $m_1^3m_2^3\dots
m_k^3$ for the degree of the polynomial $P$, where $m_i=n_i$ if
$n_i>1$, and $m_i=2$ otherwise.
The work of Riley \cite{Riley} provides a sharper upper bound
$(\alpha-1)/2$ for the degree of the invariant trace field, where
$(\alpha, \beta)$ denotes the normal form of the 2-bridge type, given
by $\alpha/\beta=m_1+1/(m_2+1/(\dots+1/m_k))..)$. Experiment suggests
that Riley's bound is usually sharp.

\begin{remark} 
% For a link diagram with $n$ crossings, there are a
%   priori $3n$ parameters generating the invariant trace field but in
%   practice this number can be reduced considerably. For example, for
  Another example is the infinite family of links that are closures of
  the braid $(\sigma_1\sigma_2^{-1})^n$. Symmetry allows to use just
  three diagram labels, and a quick computation then shows that
  {just one translation parameter suffices and} the
  invariant trace field is generated over $\mathbb{Q}$ by
  $\sqrt{-3-4\cos(\pi/n)+4\cos^2(\pi/n)}$ %and $\cos(\pi/n)$
  (see the ``Examples" section of \cite{ThistlethwaiteTsvietkova} for
  a picture and relations).
\end{remark}

In fact, for a ``random'' knot chosen from existing knot tables it is
rare that the invariant trace field is not generated by a single one
of the parameters, since it is unusual that the invariant trace
field has a proper subfield of degree $>1$.

\subsection*{Acknowledgments:}

The authors would like to thank Morwen Thistlethwaite for help with
computational aspects, and to Alan Reid for bringing attention to
the rep-polynomial in the work of Riley. The authors also acknowledge
support from U.S. National Science Foundation grants DMS 1107452,
1107263, 1107367 ``RNMS: GEometric structures And Representation
varieties" (the GEAR Network).

\end{document}